\documentclass[review,3p]{elsarticle}

\usepackage{lineno}

\usepackage{mathrsfs}

\modulolinenumbers[5]

\journal{Journal of Differential Equations}

\bibliographystyle{elsarticle-num}

\usepackage{amsmath}
\usepackage{amssymb}
\usepackage{amsthm}  

\newtheorem{thm}{Theorem}

\newtheorem{dfn}{Definition}

\newtheorem{problem}{Problem}

\begin{document}
\begin{frontmatter}

\title{Controllability of two-point boundary value problem for wave equations in $L^1$ and $L^2$ spaces: One dimensional case}

\author[mymainaddress]{Yuyou Gan}
\author[mymainaddress]{Sisi Huang\corref{cor1}}
\ead{sisih@zju.edu.cn}
\author[mymainaddress]{Dexing Kong\corref{cor1}}
\ead{dkong@zju.edu.cn}
\address[mymainaddress]{School of Mathematical Sciences, Zhejiang University, Hangzhou 310027, China}

\cortext[cor1]{Corresponding author.}
\fntext[fn1]{Declarations of interest: none.}
\fntext[fn2]{
Author contributions: Sisi Huang and Dexing Kong designed the study. Yuyou Gan and Sisi Huang wrote the paper. All the authors participated in the performance of the research. All the authors approved the final version for publication.
}
\fntext[fn3]{Funding: this research did not receive any specific grant from funding agencies in the public, commercial, or not-for-profit sectors.}

\begin{abstract}
In this paper we discuss the controllability of two-point boundary value problem (TBVP) for one-dimensional wave equation. Some new concepts are introduced: TBVP input control problem, minimum-input solution (MS) and pre-minimum-input solution (PMS). We set the metric in $L^1$ and $L^2$ spaces on a closed set, and control the input to reach its minimum. And we mainly discuss the property of input, the existence and uniqueness of MS and PMS for $L^1$ and $L^2$ metric respectively. The minimum inputs lie on a strip in $L^1$ and PMS for $L^1$ and $L^2$ always exists. Furthermore, to construct PMS, we also introduce an approximation method which meets certain conditions.
\end{abstract}

\begin{keyword}
wave equation, two-point boundary value problem, controllability problem, minimum input control, approximation theorem
\end{keyword}

\end{frontmatter}

\section{Introduction}
Wave equation has applications in many fields, such as mathematical modeling of physical phenomena and even sociology problems \cite{wang2020global}. And control problem is one of today's most significant problem in science and technology, which describes to move the system from any given initial state to any other final state with an input \cite{kookos2004modern(p4-9)}. Moreover, in the field of distributed systems, exact controllability consists in trying
to drive the system to rest in a given finite time \cite{lions1988exact}. Kong has put forward two-point boundary value problem (TBVP) for differential equations \cite{dkong2010partial(p121-122)}. Consider the following first hyperbolic type, in other words, wave equation
\begin{equation}\label{1.1}
u_{tt}-c^2 \displaystyle\sum_{i=1}^n \frac{\partial^2 u}{\partial x_i^2}=0,
\end{equation}
where $t$ is the time variable, take $\boldsymbol x=(x_1, ..., x_n)$ are variables, $u = u(t, \boldsymbol x)$ is the unknown function of $t,\boldsymbol x$, and $c$ is some constant. So given two smooth functions $u_0(\boldsymbol x), u_T (\boldsymbol x)$ and a positive constant $T$, can we find a $C^2$-smooth function $u = u(t,\boldsymbol x)$ defined on $[0, T] \times \mathbb R^n$ such that the function $u = u(t, \boldsymbol x)$ satisfies the equation (\ref{1.1}) on the domain $[0, T] \times \mathbb R^n$, with the initial condition and the terminal condition
\begin{equation}\label{1.2}
u(0, \boldsymbol x) = u_0(\boldsymbol x),\ u(T, \boldsymbol x) = u_T (\boldsymbol x),\ \forall \boldsymbol x \in \mathbb R^n
\end{equation}
hold. This problem is specially put forward as TBVP of wave equation by Kong. Besides, Kong has also put forward TBVP for three dimensional wave equation, nonlinear wave equation, quasilinear wave equation and so on, which still remain open \cite{dkong2010partial(p121-122)}. 

In one-dimensional case, the existence of the solution of TBVP has been well discussed in \cite{kong2011two} and the solution is not unique. Based on this, we draw aspiration from minimum energy control \cite{Klamka2019controllability(p21-23)}, where it will bring the desired state with a minimum expenditure of energy. Therefore, another issue arises though our concepts of minimum energy are slightly different. What we actually desire to do here is to find the minimum input $u_t(0,x)$ for some metric $\mathscr{H}$ and the initial state $u(0,x)$ can be successfully transited to the final state $u(T,x)$. To be specific, find $u_t(0,x)$ such that
\begin{equation}
J=\|u_t(0,x)\|_{\mathscr{H}}
\end{equation}
reaches its minimum. And if the answer to the existence of minimum input is yes, what about its uniqueness? We call this input control problem.

In this article, we solve the input control problem of one-dimensional wave equation in $L^p\ (p=1,2)$ norm. The measure is confined on the tight interval $[-(2K_1+1)T,(2K_2+1)T],\ K_i \in \mathbb N^*$ can be arbitrarily large and thus the corresponding solution is defined on a trapezoidal region according to the wave propagation. Additionally, we further investigate the minimum input by bringing up some new concepts.

\subsection{Related work}

Kong \cite{dkong2010partial(p121-122), kong2011two} initiated TBVP for partial differential equations and discussed its exact controllability for several kinds of linear and nonlinear wave equations.  However, TBVP is essentially different from boundary control problem, which is more commonly known. There have been enormous study on boundary control problems for hyperbolic systems. Russell \cite{russell1978controllability} investigated the problem for linear partial differential equations and Lions \cite{lions1988exact} introduced a systematic method for exact controllability. Several other systems have also been discussed, with valuable results gained regarding to this field, which contains nonlinear hyperbolic systems, nonlinear wave equation, semilinear wave equations and so on (e.g., \cite{russell1973a, chewning1976controllability, lasiecka1989exact, lasiecka1989exact2, zuazua1990exact, zuazua1993exact}).

Another relevant field is minimum energy control problem, which is also different from input control problem. Minimum energy control problem is closely related to controllability problem, and was formulated and exhaustively discussed in Klamka \cite{Klamka2019controllability(p21-23)}.

Therefore, we can say results presented here are original. Additionally, we have established a special approximation method. As we all know, function approximation theory satisfying various spaces or needs is of significant value in the field of analysis and there have been many relevant classical theorems. Motivated by Bernstein polynomial and interpolation, we propose a $C^1$ approximation in $L^p[a,b]$ which meets certain requirements. Compared with classical theorems of function approximation, e.g., Bernstein polynomials, Stone–Weierstrass theorem, Chebyshev polynomials and proposition of approximation in $L^p$ \cite{folland1984real}, our approximation theorem can realize $C^1$ approximation in $L^p$ and at the same time: 
\begin{enumerate}
\item Adjust the difference between the values and derivative values at two endpoints.
\item Keep the function integral unchanged. 
\end{enumerate}
See section $2$ for more details.

\section{Problems and main results}

\begin{problem}{Two-point Boundary Value Problem (TBVP)}

Given two smooth function $f_0(x)$, $f_T(x)$ and a constant $T>0$, can we find a function $u(t,x)\in C^2([0,T]\times \mathbb R)$ such that
\begin{equation}\label{TBVP}
\left\{\begin{array}{l}{u_{tt}-c^2 u_{xx}=0,} \\ {u(0,x)=f_0(x),\ u(T,x)=f_T(x).}\end{array}\right.
\end{equation}
\end{problem}

In the following text, let $c=1$ without loss of generality. In fact let $\widetilde t=ct$, $u_{tt}-c^2 u_{xx}=0$ can be turned into $u_{tt}- u_{xx}=0$.

\begin{dfn}
$u(t,x)$ is the {\bf solution} if it satisfies TBVP equation (\ref{TBVP}). For a solution $u(t,x)$, call $v(x)=u_t(0,x)$ the  {\bf input} and all the inputs constitute a input space.
\end{dfn}

For another direction, the solution induced by input $v(x)$ is defined by D'Alembert equation
\begin{equation}
u(t,x) = \frac 1 2 (f_0(x+t)+f_0(x-t)) + \frac1 2 \int_{x-t}^{x+t}v(s)ds.
\end{equation}
It is easy to verify that the solution and input have one-to-one correspondence.

\begin{dfn}\label{dfn2}
Given some metric, a solution is a {\bf minimum-input solution (MS)} if its input is minimum in the input space, and a solution sequence is a {\bf pre-minimum-input solution (PMS)} if its corresponding input sequence converges to a function whose measure is the lower bound of input space for the given metric.
\end{dfn}

\begin{problem}{TBVP input control problem}

Given some metric to the input space of TBVP, the existence and uniqueness of minimum-input-solution (MS) and pre-minimum-input solution (PMS).
\end{problem}

The article discusses TBVP input control problem in $L^p[-(2K_1+1)T, (2K_2+1)T]\ (p=1,2)$, and $K_i\in \mathbb N^*,\ i=1,2$ could be arbitrarily large. In the following text, we may denote the metric space by $L^p$ for simplicity. In order to solve problem $2$ in $L^p$, what we mainly want to discuss is
\begin{equation}\label{eq.min}
min\ \displaystyle \int_{-(2K_1+1)T}^{(2K_2+1)T}|v(x)|^pdx,\ p=1,2.
\end{equation}
According to the wave propagation, the solution is in a {\bf trapezoidal region}
\begin{equation}
\Omega=\{(t,x):-(2K_1+1)T\leq x \leq (2K_2+1)T,\ 0\leq t\leq min\{-(2K_1+1)T+x,\ T,\ (2K_2+1)T-x \} \}.
\end{equation}

The following are some of the results we have obtained, and we will analyze and prove them step by step later.
\begin{enumerate}
\item What does the input look like and its properties.

The input $v(x)$ can be expressed as a function of the derivatives of $f_0(x)$ and $f_T(x)$, and $v(x),\ x\in [-(2K_1+1)T,(2K_2+1)T]$ can be determined by $v(x),\ x\in [-T,T)$. Additionally, the corresponding $u(t,x)$ is the $C^2$ solution if and  only if $v(x),\ x\in C^1[-T,T]$ satisfies integral condition and endpoints conditions. Then $[-T,T]$ is the \textbf{decision interval}. The value on the decision interval can be uniquely extended to $[-(2K_1+1)T,(2K_2+1)T]$ according to a certain recurrence relation, and the solution to the equation (\ref{TBVP}) is in $C^2(\Omega)$. As a result, with those certain conditions met, discussions on domain of $x$ can be changed from the original $ [-(2K_1 + 1) T, (2K_2 + 1) T] $ to $ [-T, T] $, and the domain of solution from a trapezoidal region to a triangle.

\begin{figure}[!htbp]
    \centerline{\includegraphics[width=12cm]{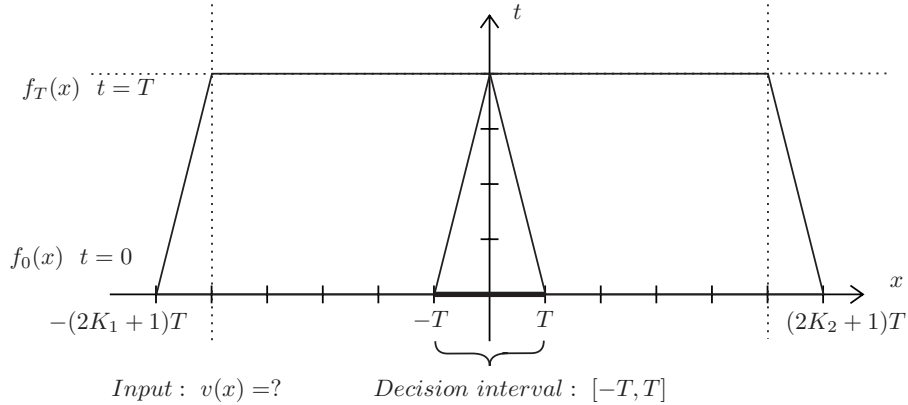}}
    \caption{Decision interval $[-T,T]$}
\end{figure}

\item The existence of MS.

In $L^1$, we can always find $v(x)\in C[-T,T]$ satisfying the integral condition, which makes the infimum of $\|v(x)\|_{L^1[-(2K_1+1)T,(2K_2+1)T]}$. Those $v(x)$ all lie in a strip region, which can be defined by order, or in some specific case it is a unique curve. However, the corresponding solution is not necessarily $C^2[-(2K_1+1)T,(2K_2+1)T]$. And MS exists if and only if there is a $v(x)\in C^1[-T,T]$ satisfying some endpoints condition as well. The result of $L^2$ is similar, we can also find $v(x)\in C^\infty[-T,T]$ satisfying the integral condition, and making the infimum of $\|v(x)\|_{L^2[-(2K_1+1)T,(2K_2+1)T]}$, and MS exists if and only if some endpoints conditions of $v$ on $[-T,T]$ are met.

\item The uniqueness of MS.

In $L^1$, if it is the specific case that such $v(x)$ described above is a unique curve satisfying certain conditions, then MS is also unique. Otherwise if MS exists then there should be infinite, but they all lie in the same order of strip region and MS is unique in this sense. In $L^2$, if MS exists, then it is unique.

\item The existence of PMS.

PMS of $L^1$ and $L^2$ always exist. Here we introduce a special approximation method which adjusts the endpoints value and keeps the integral unchanged.

\item The uniqueness of PMS.

The uniqueness of PMS is described by its limit. The limit of PMS in $L^1$ lies on the certain order of curve or strip almost everywhere. And PMS in $L^2$ converges to the original curve almost everywhere.
\end{enumerate}

\section{Preliminaries}\label{Preliminaries}

\begin{thm}\label{thm input}
In TBVP, given the recurrence relation
\begin{equation}
\label{recurrence relation}
v(x+T)=v(x-T)+2 f_{T}^{\prime}(x)-f_{0}^{\prime}(x+T)-f_{0}^{\prime}(x-T),
\end{equation}
the solution is $C^2$ if and only if $v(x)$ on $[-T,T]$ meets the following requirements:
\begin{enumerate}
\item $v(x)$ meets the integral condition
\begin{equation}
\label{integral}
\int_{-T}^{T} v(x) d x=2 f_{T}(0)-f_{0}(T)-f_{0}(-T).
\end{equation}
\item $v(x)$ is $C^1$ on $[-T,T]$.
\item The value and the derivative at the endpoints satisfy the following relations
\begin{equation}
\label{first derivative}
v(T)=v(-T)+2 f_{T}^{\prime}(0)-f_{0}^{\prime}(T)-f_{0}^{\prime}(-T)
\end{equation}
and
\begin{equation}
\label{second derivative}
v^{\prime}(T)=v^{\prime}(-T)+2 f_{T}^{\prime \prime}(0)-f_{0}^{\prime \prime}(T)-f_{0}^{\prime \prime}(-T).
\end{equation}
\end{enumerate} 
\end{thm}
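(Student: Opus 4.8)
The plan is to work entirely from the D'Alembert representation
\[
u(t,x) = \tfrac{1}{2}\big(f_0(x+t)+f_0(x-t)\big) + \tfrac{1}{2}\int_{x-t}^{x+t} v(s)\,ds,
\]
which already builds in both the wave equation and the initial datum $u(0,x)=f_0(x)$ (indeed $u_t(0,x)=v(x)$), so the only remaining requirements are the terminal condition $u(T,x)=f_T(x)$ and the $C^2$-regularity of $u$. I would first isolate two observations. First, the recurrence (\ref{recurrence relation}) is exactly $\partial_x$ of the terminal condition, obtained by differentiating the identity $f_T(x)=\tfrac12(f_0(x+T)+f_0(x-T))+\tfrac12\int_{x-T}^{x+T}v$ in $x$. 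Second, the integral condition (\ref{integral}) is precisely that same identity evaluated at $x=0$. Consequently, once the recurrence is assumed, enforcing the full terminal condition is equivalent to enforcing it at the single point $x=0$.

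For the sufficiency direction, I would extend $v$ from the decision interval $[-T,T]$ to all of $[-(2K_1+1)T,(2K_2+1)T]$ by (\ref{recurrence relation}) and form $u$ via the D'Alembert representation. Setting $\Phi(x):=u(T,x)-f_T(x)$, a direct computation using (\ref{recurrence relation}) gives $\Phi'(x)\equiv 0$, while (\ref{integral}) gives $\Phi(0)=0$; hence $\Phi\equiv 0$ and the terminal condition holds. For regularity, differentiating the representation twice shows $u_{tt}=u_{xx}$ identically, and that $u\in C^2$ is equivalent to $v\in C^1$ on the whole interval (using $f_0,f_T\in C^2$). Inside each translate $[(2k-1)T,(2k+1)T]$ the extended $v$ is automatically $C^1$, since the data are smooth and $v$ is $C^1$ on $[-T,T]$ (condition 2); the only possible failure of smoothness is at the seams $x=(2k+1)T$.

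The crux is therefore the matching at the seams. Comparing the one-sided limits of $v$ and $v'$ at the first seam $x=T$ --- the left limits coming from the decision values and the right limits coming from (\ref{recurrence relation}) with $x\to 0^+$ --- I would show that $C^0$- and $C^1$-matching there are exactly the endpoint relations (\ref{first derivative}) and (\ref{second derivative}); the identical computation at $x=-T$ reproduces the same two relations, so condition 3 handles both ends of the decision interval at once. The key point that makes two relations suffice for infinitely many seams is a propagation argument: writing $v$ on $[(2k-1)T,(2k+1)T]$ through the shifted recurrence, the jump of $v$ (and of $v'$) across $x=(2k+1)T$ equals the jump across $x=(2k-1)T$ plus a term built only from the smooth data, so once the jump vanishes at the first seam it vanishes at all of them. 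Thus (\ref{first derivative})--(\ref{second derivative}) upgrade piecewise $C^1$ regularity to global $C^1$ regularity, i.e. $u\in C^2(\Omega)$.

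For the converse I would start from a $C^2$ solution $u$, set $v:=u_t(0,\cdot)$, and read off the three conditions: $C^2$-regularity forces $v\in C^1$ (condition 2); the terminal condition at $x=0$ is condition 1; and global $C^1$-regularity of $v$ forces seam-matching at $x=T$, which is exactly (\ref{first derivative})--(\ref{second derivative}). I expect the main obstacle to be bookkeeping rather than a hard estimate: one must use the recurrence and the terminal identity only on the $x$-ranges where $(T,x)$ actually lies on the top edge of the trapezoid $\Omega$, and must carry out the one-sided-limit and propagation computations cleanly, so that the single pair of endpoint relations genuinely certifies $C^1$-smoothness across every seam of the extended input.
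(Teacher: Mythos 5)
Your proposal is correct, and its overall skeleton (D'Alembert representation, extension of $v$ by the recurrence, integral condition as the terminal identity at $x=0$, endpoint relations as $C^1$-matching at the seams $x=(2k+1)T$) matches the paper's. The one step you handle genuinely differently is the verification of the terminal condition in the sufficiency direction. The paper iterates the recurrence explicitly to obtain closed-form expressions for $v$ on each translate $[(2k-1)T,(2k+1)T]$ (equations (\ref{v(x)=})--(\ref{v(x)=2})), shifts the integration bounds, and reduces the claim to the ``equilibrium'' identity (\ref{equilibrium}), which it then checks by direct substitution. You instead observe that the recurrence (\ref{recurrence relation}) is precisely the $x$-derivative of the terminal identity, so the defect $\Phi(x)=u(T,x)-f_T(x)$ satisfies $\Phi'\equiv 0$, while (\ref{integral}) gives $\Phi(0)=0$, whence $\Phi\equiv 0$. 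This is a cleaner and less error-prone route that avoids all of the paper's bookkeeping with nested sums. Your proposal also makes explicit the jump-propagation argument showing that $C^1$-matching at the single seam $x=T$ forces matching at every seam $(2k+1)T$ --- a point the paper asserts in one sentence (``the recurrence relation combined with conditions 2 and 3 implies $v(x)\in C^1(\mathbb R)$'') without proof --- so on that step your write-up is actually more complete than the paper's.
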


\begin{proof}
According to D'Alembert equation,
\begin{equation}
\label{DAlembert}
u(t,x) = \frac 1 2 (f_0(x+t)+f_0(x-t)) + \frac1 2 \int_{x-t}^{x+t}v(s)ds.
\end{equation}
Let $t = T$ and get
\begin{equation}\label{3.6}
f_T(x) = \frac 1 2 (f_0(x+T)+f_0(x-T)) + \frac1 2 \int_{x-T}^{x+T}v(s)ds .
\end{equation}
Let $x=0$ in (\ref{3.6}) and the integral condition (\ref{integral}) follows. $u(t,x) \in C^2$ certainly implies $v(x) = u_t(0,x) \in C^1$. Then calculate the first and second derivatives of $u(t,x)$ with respect to $x$ at $x=0$ to get the equations (\ref{first derivative}) and (\ref{second derivative}).

For another direction, the recurrence relation combined with conditions 2 and 3 implies $v(x)\in C^1(\mathbb R)$. Define $u(t,x)$ as the equation (\ref{DAlembert}) and thus $u\in C^2$ follows because $f_0$ is a smooth function. Finally, we need to check the solution defined by (\ref{DAlembert}) satisfies TBVP. So it suffices to check
\begin{equation}
f_T(x)=\frac 1 2 (f_0(x+T)+f_0(x-T))+\frac 1 2 \int_{x-T}^{x+T} v(s) ds.
\end{equation}
From the recurrence relation (\ref{recurrence relation}), we get when $x\in [(2k-1)T, (2k+1)T],\ k\in \mathbb N^*,$
\begin{equation}
\label{v(x)=}
v(x)=v(x-2kT)-f_0'(x-2kT)+f_0'(x)+2\sum\limits_{n=0}^{k-1}f_T'(x-(2n+1)T)-2\sum\limits_{n=0}^{k-1}f_0'(x-2nT).
\end{equation}
And when $x\in [-(2k+1)T, -(2k-1)T],\ k\in \mathbb N^*,$
\begin{equation}
\label{v(x)=2}
v(x)=v(x+2kT)-f_0'(x+2kT)+f_0'(x)-2\sum\limits_{n=0}^{k-1}f_T'(x+(2n+1)T)+2\sum\limits_{n=0}^{k-1}f_0'(x+2(n+1)T).
\end{equation}
Without loss of generality, we just check the case when $x\in [(2k-1)T, (2k+1)T],\ k\in \mathbb N^*$,
\begin{equation}
\begin{array}{ll}
\displaystyle\int_{x-T}^{x+T}v(s)ds &= \displaystyle\int_{(2k-1)T}^{x+T} v(s) ds + \int_{x-T}^{(2k-1)T} v(s) ds\\ 
&= \displaystyle\int_{(2k-1)T}^{x+T} v(s) ds + \displaystyle\int_{x+T}^{(2k+1)T} v(s-2T) ds\\
&= \displaystyle\int_{(2k-1)T}^{x+T} v(s) ds + \displaystyle\int_{x+T}^{(2k+1)T} v(s)-2f_T'(s-T)+f_0'(s)+f_0'(s-2T) ds\\
&= \displaystyle\int_{(2k-1)T}^{(2k+1)T} v(s) ds + \displaystyle\int_{x+T}^{(2k+1)T}-2f_T'(s-T)+f_0'(s)+f_0'(s-2T) ds,
\end{array}
\end{equation}
Then it suffices to prove
\begin{equation}
\label{equilibrium}
2f_T(2kT)-f_0((2k+1)T) - f_0((2k-1)T) = \int_{(2k-1)T}^{(2k+1)T} v(s) ds.
\end{equation}
This equilibrium can be checked by substituting equation (\ref{v(x)=}) into the right hand side of (\ref{equilibrium}).
\end{proof}

\begin{dfn}
In a given metric space $X$, if $x,y\in X$ satisfy $\rho(x,y)<\epsilon$ in the metric space, then we say $x$ is $ \epsilon$-close to $y$ in $X$ , or $x$ and $y$ are $\epsilon$-close in $X$.
\end{dfn}

\begin{thm}\label{thm approximation}
For $1\leq p<\infty$ and arbitrarily given function $f(x)\in C[a,b]$, then for $\forall \epsilon >0$, there exists some $C^1$ function $g(x)$ s.t. $g(x)$ is $\epsilon-$close to $f(x) $ in $L^p[a,b]$ space, and satisfying the following conditions:
\begin{enumerate}
\item $g(b)=g(a)+c_1,g'(b)=g'(a)+c_2$, where $c_1,c_2$ are given constants.

\item $\displaystyle\int_a^b g(x) dx=\int_a^b f(x) dx$
\end{enumerate}
\end{thm}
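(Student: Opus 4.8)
The plan is to start from any convenient $C^1$ (indeed $C^\infty$) approximation of $f$ in $L^p$ and then absorb the three side constraints --- the prescribed endpoint jump $c_1$, the prescribed derivative jump $c_2$, and the integral-preservation condition --- by adding small correction bumps. The conceptual point that organizes the whole argument is that these three functionals have very different character with respect to the $L^p$ norm: the endpoint value and derivative functionals $g\mapsto g(b)-g(a)$ and $g\mapsto g'(b)-g'(a)$ are \emph{discontinuous} (unbounded) for $\|\cdot\|_{L^p}$, so an arbitrarily large prescribed change in them can be realized at arbitrarily small $L^p$ cost by concentrating a bump in a thin interval near an endpoint; whereas the integral functional $g\mapsto\int_a^b g$ is $L^p$-\emph{continuous} (bounded, by H\"older), so it cannot be cheaply corrected by concentration and must instead be handled by keeping the required correction small from the outset.

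First I would invoke density: by the Weierstrass theorem choose a polynomial $g_0$ with $\|f-g_0\|_\infty$, hence $\|f-g_0\|_{L^p}$, less than $\epsilon/3$; note $g_0\in C^\infty\subset C^1$. Then set $\alpha:=c_1-(g_0(b)-g_0(a))$, $\beta:=c_2-(g_0'(b)-g_0'(a))$ and $\gamma:=\int_a^b(f-g_0)$. A single application of H\"older's inequality (with $q$ the conjugate exponent) gives $|\gamma|\le(b-a)^{1/q}\|f-g_0\|_{L^p}$, so $\gamma$ is automatically small --- this is precisely why the $L^p$-bounded integral functional causes no trouble.

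Next I would build an endpoint bump $\phi$ supported in a thin interval $[b-\delta,b]$: take the cubic Hermite interpolant with $\phi(b-\delta)=\phi'(b-\delta)=0$ and $\phi(b)=\alpha$, $\phi'(b)=\beta$, extended by zero to $[a,b-\delta]$, which is $C^1[a,b]$. A direct estimate (rescaling to $[0,1]$) gives $\|\phi\|_{L^p}\le C(|\alpha|+|\beta|\delta)\delta^{1/p}$ and $\bigl|\int_a^b\phi\bigr|\le C(|\alpha|+|\beta|\delta)\delta$, both tending to $0$ as $\delta\to 0$; choose $\delta$ so small that $\|\phi\|_{L^p}<\epsilon/3$. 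Since $\phi$ and $\phi'$ vanish at $a$, adding $\phi$ changes the endpoint data of $g_0$ by exactly $(\alpha,\beta)$, i.e.\ it forces condition~1. Finally, to restore the integral, let $\eta:=\gamma-\int_a^b\phi$ (small, as both terms are small) and take $\phi_I:=\eta\,\Phi$, where $\Phi$ is a fixed $C^1$ function supported in a compact subinterval of $(a,b)$ with $\int_a^b\Phi=1$; then $\phi_I$ and $\phi_I'$ vanish at both endpoints, so $\phi_I$ disturbs neither part of condition~1, while $\int_a^b\phi_I=\eta$ restores condition~2, and $\|\phi_I\|_{L^p}=|\eta|\,\|\Phi\|_{L^p}<\epsilon/3$ for $\eta$ small enough. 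Setting $g:=g_0+\phi+\phi_I$, one checks directly that $g(b)-g(a)=c_1$, $g'(b)-g'(a)=c_2$, $\int_a^b g=\int_a^b f$, and $\|g-f\|_{L^p}<\epsilon$.

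The step I expect to require the most care is the interaction between the bumps and the $\epsilon$-budget accounting: one must verify that the endpoint corrector $\phi$ --- which is free to be large in height --- does not perturb the integral beyond what the cheap, spread-out corrector $\phi_I$ can fix within budget. This is why the bumps are applied in order (endpoints first, integral last) and why $\phi_I$ is deliberately spread over a \emph{fixed} interval rather than concentrated: concentrating the integral corrector on a width-$\delta'$ interval would force its $L^p$ norm up like $\delta'^{1/p-1}$ and break the estimate for $p>1$. Everything else is routine, the only genuinely quantitative inputs being the Hermite rescaling estimate and H\"older's inequality.
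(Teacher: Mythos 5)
Your proposal is correct, and it reaches the result by a genuinely different decomposition than the paper. The paper interleaves corrections and smoothing in a six-stage chain: a piecewise-linear ramp on $[b-\delta,b]$ to fix the value jump, a global constant subtraction $-r/(b-a)$ to restore the integral (this works because a constant shift preserves $g(b)-g(a)$ and $g'$), a Bernstein polynomial to gain $C^1$ regularity (exploiting that Bernstein polynomials preserve endpoint values), another constant shift, a cubic Hermite replacement on $[b-\delta,b]$ to fix the derivative jump, and a final constant shift. You instead smooth \emph{first} (Weierstrass), then add a single compactly supported Hermite bump at $b$ that fixes the value and derivative jumps simultaneously, then add a fixed interior bump $\eta\Phi$ scaled to restore the integral. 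Your route buys two things: (i) no re-smoothing is needed after the endpoint surgery, since every correction is already $C^1$; and (ii) your Hermite bump is estimated by direct rescaling, $\|\phi\|_{L^p}\le C(|\alpha|+|\beta|\delta)\delta^{1/p}$, which avoids the paper's appeal to the interpolation-error bound $|g_4-H|\le \delta^4/384\,\max|g_4^{(4)}|$ --- a bound that does not literally apply there, since $H$ is prescribed the modified derivative $g_4'(a)+c_2$ at $b$ rather than $g_4'(b)$. What the paper's version buys is a slightly simpler integral correction (subtracting a constant rather than introducing an auxiliary bump $\Phi$). The only point in your write-up needing a word of care is the budget: $\eta=\gamma-\int_a^b\phi$ satisfies $|\eta|\le (b-a)^{1/q}\|f-g_0\|_{L^p}+C(|\alpha|+|\beta|\delta)\delta$, so $\|\phi_I\|_{L^p}=|\eta|\,\|\Phi\|_{L^p}$ is controlled only after you fix $\Phi$ and then choose $\|f-g_0\|_{L^p}$ and $\delta$ small against the constant $\|\Phi\|_{L^p}$; with that ordering of choices (which you essentially indicate) the argument closes.
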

\begin{proof}
Firstly let
\begin{equation}
g_1(x)=\left\{\begin{array}{ll}f(x),&x\in[a,b-\delta],\\ (f(a)+c_1-f(b-\delta) )/ \delta *(x-b+\delta)+f(b-\delta), &x\in(b-\delta,b],\end{array}\right.
\end{equation}
where $\delta>0$ is arbitrarily small. Because $f(x)\in C[a,b]$, then $\exists M>0$ s.t. $|f(x)|<M$. Let $M_1=max\{M,f(a)+c_1\}$, then
\begin{equation}
|\int_a^b g_1(x)-f(x)\ dx|\leq \int_a^b |g_1(x)-f(x)| dx\leq 2M_1 \delta.
\end{equation}
Let $r_1 = \displaystyle\int_a^b g_1(x)-f(x)\ dx$, and let $g_2(x)=g_1(x)-r_1/(b-a)$, then $g_2(x)\in C[a,b]$ satisfies both $g_2(b)=g_2(a)+c_1$ and $\displaystyle\int_a^b g_2(x) dx=\int_a^b f(x) dx$. Besides, we have
\begin{equation}
\begin{array}{ll}||g_2(x)-f(x)||_{L^p} &\leq ||g_2(x)-g_1(x)||_{L^p}+||g_1(x)-f(x)||_{L^p}\\&=\displaystyle\frac{|r_1|}{(b-a)^{1-1/p}}+(\int_{b-\delta}^b|g_1(x)-f(x)|^p\ dx)^{1/p}\\ & \displaystyle\leq\frac{2M_1 \delta}{(b-a)^{1-1/p}}+2M_1\delta ^{1/p} < \delta_1, \end{array}
\end{equation}
where $\delta$ can be arbitrarily small, so $\delta_1$ can also be arbitrarily small. Then $g_2(x)$ is $\delta_1-$close to $f(x)$ in $L^p$ space.

The Bernstein polynomial is used for approximation to construct the $ C^1 $ function on $ [a, b] $, i.e.,
\begin{equation}
B_n(g_2,x)=B_n(g_2)=\sum_{k=0}^n g_2(\frac k n(b-a)+a)C_n^k (\frac{x-a}{b-a})^k (1-\frac{x-a}{b-a})^{n-k}.
\end{equation}
Because $g_2(x)$ is a continuous function on $[a,b]$, then $B_n(g_2)$ uniformly converges to $g_2$ on $[a,b]$. Thus for arbitrarily small $\delta_2>0$, there exists $m\in \mathbb N^*$ s.t. 
\begin{equation}
|B_m(g_2,x)-g_2(x)|<\delta_2,\ \forall x \in [a,b].
\end{equation}
Let $g_3(x)=B_m(g_2,x)$, then $g_3(x)\in C^1[a,b]$. Thus
\begin{equation}
|\int_a^b g_3(x)-g_2(x)\ dx|\leq \int_a^b |g_3(x)-g_2(x)| dx\leq (b-a) \delta_2.
\end{equation}
Let $r_2 = \displaystyle\int_a^b g_3(x)-g_2(x)\ dx$ and $g_4(x)=g_3(x)-r_2/(b-a)$, then $g_4(x)\in C^1[a,b]$ and we have $\displaystyle\int_a^b g_4(x) dx=\int_a^b g_2(x) dx=\int_a^b f(x) dx$. And according to the construction of Bernstein polynomial, the value of the endpoints of $ g_3 (x) $ is consistent with $ g_2 (x) $, so
\begin{equation}
g_4(b)=g_4(a)+c_1.
\end{equation}
And we have
\begin{equation}
||g_4-f||_{L^p}\leq ||g_4-g_3||_{L^p}+||g_3-g_2||_{L^p}+||g_2-f||_{L^p}\leq \delta_2(b-a)^{1/p}+\delta_2(b-a)+\delta_1<\delta_3.
\end{equation}
Also $\delta_3>0$ can be arbitrarily small, $g_4(x)$ is $\delta_3$-close to $f(x)$ in $L^p$ space. In fact $g_4(x)$ is a smooth function. Let
\begin{equation}
g_5(x)=\left\{\begin{array}{ll}g_4(x),&x\in[a,b-\delta],\\H(x), &x\in(b-\delta,b],\end{array}\right.
\end{equation}
where $H(x)$ is a cubic Hermite spline and it ensures
\begin{equation}
H(b-\delta)=g_4(b-\delta),H'(b-\delta)=g_4'(b-\delta),H(b)=g_4(b),H'(b)=g_4'(a)+c_2.
\end{equation}
Specifically,
\begin{equation}
\begin{aligned} H(x)=& \frac {\delta^3-3\delta(x-b+\delta)^{2}+2(x-b+\delta)^{3}}{\delta^3} g_4(b-\delta) +\frac{(x-b+\delta)(x-b)^{2}}{\delta^2} g_4'(b-\delta) \\ &+\frac{3\delta\left(x-b+\delta\right)^{2}-2(x-b+\delta)^3}{\delta^3} g_4(b) +\frac{(x-b+\delta)^2 (x-b)}{\delta^2} (g_4'(a)+c_2), \end{aligned}
\end{equation}
when $x\in [b-\delta,b]$. The error of the estimate
\begin{equation}
|g_4(x)-H(x)|\leq \delta^4/384 * max_{b-\delta \leq x \leq b}\ g_4^{(4)}(x)\leq M_2 \delta^4\leq \delta_4.
\end{equation}
Also we let $\displaystyle r_3 = \int_a^b g_5(x)-g_4(x)\ dx$ and let $g(x)=g_5(x)-r_3/(b-a)$, then $g(x)\in C^1[a,b]$. And we have
\begin{equation}
\int_a^b g(x) dx=\int_a^b g_4(x) dx=\int_a^b f(x) dx.
\end{equation}
It satisfies the endpoint conditions
\begin{equation}
g(b)=g(a)+c_1,\  g'(b)=g'(a)+c_2.
\end{equation}
Finally, consider the distance between $g$ and $f$,
\begin{equation}
||g-f||_{L^p}\leq ||g-g_5||_{L^p}+||g_5-g_4||_{L^p}+||g_4-f||_{L^p}\leq \delta_4(b-a)^{1/p} +\delta_4 \delta^{1/p}+\delta_3.
\end{equation}
Then for $\forall \epsilon > 0$, because $\delta_4,\delta,\delta_3$ are arbitrarily small, thus we can make them sufficiently small to get $\delta_4(b-a)^{1/p} +\delta_4 \delta^{1/p}+\delta_3<\epsilon$, i.e.,
\begin{equation}
||g-f||_{L^p}<\epsilon.
\end{equation}
So $g(x)$ is $\epsilon$-close to $f(x) $ in $L^p[a,b]$.

\end{proof}

\begin{thm}\label{thm3}
On interval $[a,b]$, there is a decreasing sequence of continuous functions $+\infty>a_1(x)\geq a_2(x)\geq ...\geq a_k(x) > -\infty$, and let $a_0(x)=+\infty$, $a_{k+1}(x)=-\infty$. $U(h,x)$ is a binary function which holds following property: for every $(h,x)$ satisfying $a_{j+1}(x)<h<a_j(x)$ for some $j$, $\displaystyle\frac{\partial U}{\partial h} = c_j$ is a constant depending on $j$ and $\lbrace c_j \rbrace$ is a strictly decreasing sequence. Then
\begin{enumerate}
\item For every $A\in \mathbb R$, $\exists h(x)\in C[b,c],\ j\in\lbrace 0,...,k\rbrace$ such that 
\begin{equation}\label{ajh}
a_{j+1}(x)\leq h(x)\leq a_j(x),
\end{equation}
and
\begin{equation}
\displaystyle\int_b^c h(x) dx = A.
\end{equation}
\item $\displaystyle\int_b^c h(x) dx = A$ and $h(x)\in C[b,c]$, then
\begin{equation}\label{I[h]} 
I[h]=\displaystyle\int_b^c U(h(x),x)dx
\end{equation}
reaches its minimum if and only if $h$ satisfies equation (\ref{ajh}) for some $j$.
\end{enumerate}
\end{thm}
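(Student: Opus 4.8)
The plan rests on one structural observation that the statement leaves implicit but which drives everything: for each fixed $x$, the map $h\mapsto U(h,x)$ is \emph{convex}. Indeed, as $h$ increases it passes through the strips in the order $j=k,k-1,\dots,0$ (since $a_{k+1}=-\infty$ and $a_0=+\infty$), and on the strip of index $j$ the slope is the constant $c_j$; because $\{c_j\}$ is strictly decreasing in $j$, the slope is strictly \emph{increasing} in $h$. Hence $U(\cdot,x)$ is piecewise linear with increasing slopes, i.e.\ convex, and its subdifferential at any point of the closed strip $[a_{j+1}(x),a_j(x)]$ contains $c_j$ (at an interior point the derivative is exactly $c_j$; at the endpoints $a_j(x)$ and $a_{j+1}(x)$ the subdifferential is $[c_j,c_{j-1}]$ resp.\ $[c_{j+1},c_j]$, each containing $c_j$). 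Once this is in hand, Part 2 becomes convex minimization under the single affine constraint $\int_b^c h=A$, exactly the setting in which a common subgradient certifies optimality.

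For Part 1 I would show that the attainable integrals, as $h$ ranges over one fixed closed strip, exhaust an interval, and that these intervals cover $\mathbb R$. Fixing $j\in\{1,\dots,k-1\}$ and using the convex combinations $h_t(x)=(1-t)a_{j+1}(x)+t\,a_j(x)$, which are continuous and lie in $[a_{j+1},a_j]$, the integral $\int_b^c h_t$ moves continuously from $\int_b^c a_{j+1}$ to $\int_b^c a_j$, so by the intermediate value theorem every value in $[\int_b^c a_{j+1},\int_b^c a_j]$ is realized. For the two unbounded strips $j=0$ and $j=k$ I would instead use the vertical shifts $a_1(x)+s$ and $a_k(x)-s$ with $s\ge 0$, whose integrals sweep $[\int_b^c a_1,+\infty)$ and $(-\infty,\int_b^c a_k]$. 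Since consecutive intervals share the endpoint $\int_b^c a_j$, their union is all of $\mathbb R$, so any prescribed $A$ is attained by an admissible $h$ in some strip.

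For the \textbf{``if''} part of Part 2, suppose $h$ lies in a single closed strip $j$, i.e.\ $a_{j+1}(x)\le h(x)\le a_j(x)$ for all $x$, with $\int_b^c h=A$. Since $c_j$ is a subgradient of $U(\cdot,x)$ at $h(x)$, convexity gives the pointwise support inequality $U(\tilde h(x),x)\ge U(h(x),x)+c_j\,(\tilde h(x)-h(x))$ for every admissible competitor $\tilde h$. Integrating over $[b,c]$ and using $\int_b^c\tilde h=\int_b^c h=A$ kills the linear term, leaving $I[\tilde h]\ge I[h]$; hence $h$ is a minimizer.

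For the \textbf{``only if''} part I would run an exchange (rearrangement) argument. If $h$ is admissible but not contained in any single closed strip, then—using continuity of $h$ and of the $a_j$ together with the intermediate value theorem—one locates two open sets $E_1,E_2$ of positive measure and indices $p<q$ on which $h$ lies strictly inside strips $p$ and $q$, so the local slopes satisfy $c_p>c_q$. Choosing a continuous perturbation $\phi$ with compact support in $E_1\cup E_2$, $\phi\le 0$ on $E_1$, $\phi\ge 0$ on $E_2$, $\int_{E_2}\phi>0$ and $\int\phi=0$, the function $h+\eta\phi$ stays inside the same two strips for all small $\eta>0$, so $U$ is genuinely linear along the perturbation and
\begin{equation}
I[h+\eta\phi]-I[h]=\eta\int_{b}^{c} c_{j(x)}\phi(x)\,dx=\eta\,(c_q-c_p)\int_{E_2}\phi(x)\,dx<0,
\end{equation}
with no error term, contradicting minimality. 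I expect the one genuine obstacle to be the extraction step here: proving rigorously that failing to lie in a single closed strip forces $h$ into the interiors of two distinct strips on positive-measure sets. The delicate points are boundary-touching (where $U$ has a kink, so $c_j$ must be read as a subgradient, not a derivative) and the possibility that adjacent curves coincide, $a_j\equiv a_{j+1}$ on part of $[b,c]$, creating degenerate empty strips. I would resolve these by passing to the first boundary curve that $h$ crosses and invoking continuity just past the crossing to exhibit an interior point of the adjacent strip, and by discarding degenerate strips so that the remaining slopes stay strictly monotone; everything else is routine once convexity and the strip-wise exactness of the perturbation are in place.
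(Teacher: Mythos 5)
Your proposal is correct in substance, and its first half coincides with the paper's argument: Part 1 via convex combinations $h_t=(1-t)a_{j+1}+ta_j$ inside a strip (the paper writes exactly the combination $\frac{A-p_2}{p_1-p_2}a_j+\frac{p_1-A}{p_1-p_2}a_{j+1}$; your vertical shifts $a_1+s$, $a_k-s$ for the two unbounded strips are in fact more robust than the paper's multiplicative rescalings $\frac{A}{p_2}a_1$, which can leave the strip when $a_1$ changes sign), and the ``if'' direction of Part 2 via the observation that $U(\cdot,x)$ is convex with $c_j$ a subgradient throughout the closed strip, so the support inequality integrates to $I[\tilde h]\ge I[h]$ once the affine constraint kills the linear term. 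Where you genuinely diverge is the ``only if'' direction. The paper does not perturb: it anchors the support inequality at the fixed curve $a_{j+1}$ (with $j$ chosen so that $\int a_{j+1}\le A\le\int a_j$), splits $[b,c]$ into $E_-=\{h<a_{j+1}\}$ and $E_+=\{h\ge a_{j+1}\}$, and obtains the uniform lower bound $I[h]\ge\int_b^c U(a_{j+1}(x),x)\,dx+c_j(A-p_2)$, valid for \emph{every} admissible $h$ and independent of $h$. Since Part 1 exhibits an admissible $h$ attaining this bound, minimality is equivalent to equality, and equality forces $h$ into the closed strip pointwise because the slope is strictly below $c_j$ under $a_{j+1}$ and strictly above $c_{j-1}\,(>c_j)$ above $a_j$. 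This buys you the ``only if'' for free and dissolves exactly the step you flag as the genuine obstacle: the extraction of two positive-measure sets in the interiors of distinct strips, together with the boundary-touching and degenerate-strip cases ($a_j\equiv a_{j+1}$ locally), all of which your exchange argument would still have to handle carefully (e.g., a continuous $h$ can leave the strip only through boundary values, where $U$ is merely subdifferentiable and your perturbation is no longer exactly linear). I would therefore replace your perturbation scheme for the ``only if'' by the equality analysis of the global lower bound; everything else in your plan stands.
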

\begin{proof}
~\
\begin{enumerate}
\item Let 
\begin{equation}
p_1=\int_b^c a_j(x) dx \geq A,\ p_2=\int_b^c a_{j+1}(x) dx\leq A.
\end{equation}
When $j \ne 0$ and $k$. If $p_1=A$, take $h(x) = a_j(x)$. If $p_2=A$, take $h(x)=a_{j+1}(x)$. Else $p_1 > A > p_2$, let
\begin{equation}\label{hx1}
h(x)=\frac{A-p_2}{p_1-p_2} a_j(x)+\frac{p_1-A}{p_1-p_2} a_{j+1}(x).
\end{equation}
When $j = 0$, let
\begin{equation}\label{hx2}
h(x)=\frac{A}{p_2}a_1(x).
\end{equation}
When $j = k$, let
\begin{equation}\label{hx3}
h(x)=\frac{A}{p_1}a_k(x).
\end{equation}
We can see $h(x)\in C[b,c] $ and it is easy to check (\ref{hx1}), (\ref{hx2}), (\ref{hx3}) satisfies the integral and also $a_{j+1}(x)\leq h(x) \leq a_j(x)$.
\item $\exists j$ such that $p_1\geq A \geq p_2$. For any $h(x)$ such that $\displaystyle\int_b^c h(x) dx = A$ and $h(x)\in C[b,c]$, let
\begin{equation}
E_-=\lbrace x\in [b,c]|h(x)< a_{j+1}(x)\rbrace,\ 
E_+=\lbrace x\in [b,c]|h(x)\geq a_{j+1}(x)\rbrace,
\end{equation}
then
\begin{equation}
\begin{array}{ll}\label{EE}
I[h(x)] &=\displaystyle \int_b^c U(h(x),x) dx = \int_b^c (U(a_{j+1}(x),x)-\int_{h(x)}^{a_{j+1}(x)}\frac{\partial U(v,x)}{\partial v} dv)dx\\
&=\displaystyle \int_{E_-}(U(a_{j+1}(x),x)-\int_{h(x)}^{a_{j+1}(x)}\frac{\partial U}{\partial v}dv)dx + \int_{E_+}(U(a_{j+1}(x),x)-\int_{h(x)}^{a_{j+1}(x)}\frac{\partial U}{\partial v}dv)dx.
\end{array}
\end{equation}
In the above equation (\ref{EE}), when $x\in E_-$, we have $h(x)<a_{j+1}(x)$ and $\displaystyle\frac{\partial U}{\partial h}<c_j$. And thus 
\begin{equation}
\displaystyle\int_{h(x)}^{a_{j+1}(x)}\frac{\partial U(v,x)}{\partial v} dv \leq c_j(a_{j+1}(x)-h(x)).
\end{equation}
So we have
\begin{equation}\label{world}
\displaystyle \int_{E_-}(U(a_{j+1}(x),x)-\int_{h(x)}^{a_{j+1}(x)}\frac{\partial U}{\partial v}dv)dx\geq \int_{E_-} U(a_{j+1}(x),x)+c_j(h(x)-a_{j+1}(x))\ dx.
\end{equation}
Similarly, when $x\in E_+$, we have $h(x)\geq a_{j+1}(x)$ and $\displaystyle\frac{\partial U}{\partial h}\geq c_j$, so
\begin{equation}
\displaystyle\int_{a_{j+1}(x)}^{h(x)}\frac{\partial U(v,x)}{\partial v} dv \geq c_j(h(x)-a_{j+1}(x)).
\end{equation}
Thus we have
\begin{equation}
\begin{array}{ll}\label{hello}
\displaystyle\int_{E_+}(U(a_{j+1}(x),x)-\int_{h(x)}^{a_{j+1}(x)}\frac{\partial U}{\partial v}dv)dx&\displaystyle=\int_{E_+}(U(a_{j+1}(x),x)+\int_{a_{j+1}(x)}^{h(x)}\frac{\partial U}{\partial v}dv)dx\\&\displaystyle\geq \int_{E_+} U(a_{j+1}(x),x)+c_j(h(x)-a_{j+1}(x))\ dx.
\end{array}
\end{equation}
So, according to (\ref{world}) and (\ref{hello}), we get
\begin{equation}
\begin{array}{ll}\label{Ihx}
I[h(x)]&\geq \displaystyle \int_b^c U(a_{j+1}(x),x)+c_j(h(x)-a_{j+1}(x))\ dx\\
&=\displaystyle \int_b^c U(a_{j+1}(x),x)\ dx + c_j(A-p_2).
\end{array}
\end{equation}
Moreover, because $h(x)\in C[b,c]$, the equality sign in (\ref{Ihx}) holds if and only if $\displaystyle\frac{\partial U}{\partial h}\equiv c_j$, i.e., (\ref{I[h]}) reaches its minimum if and only if equation (\ref{ajh}) holds.
\end{enumerate}

\end{proof}

\begin{figure}[!htbp]
    \centerline{\includegraphics[width=16cm]{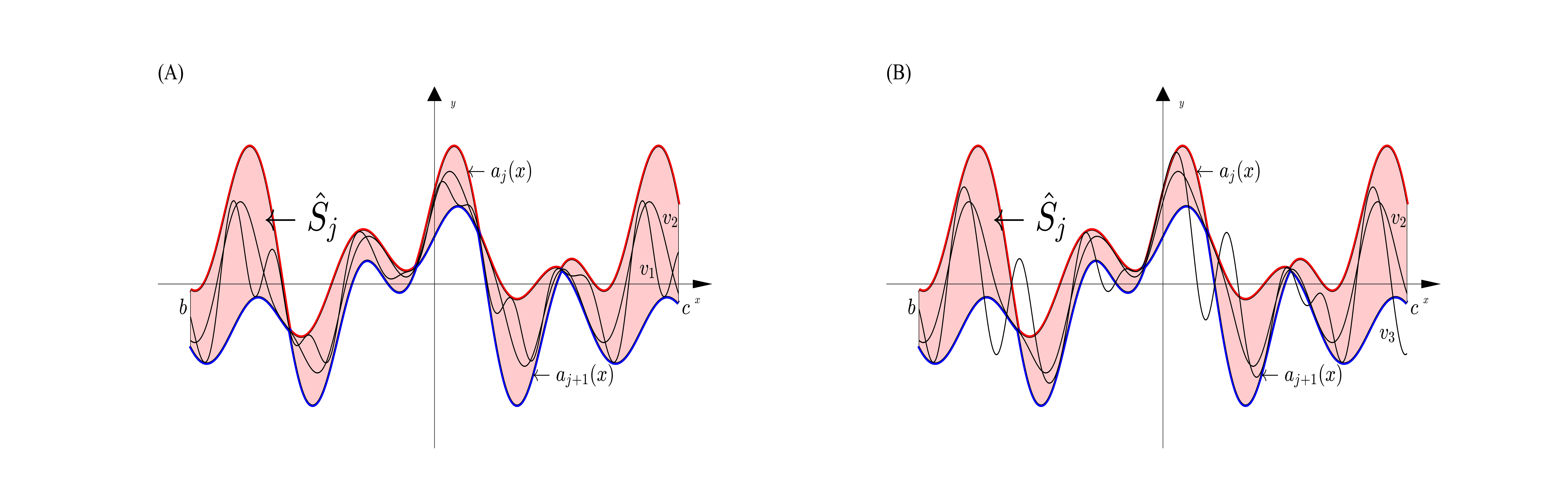}}
    \caption{Two situations of theorem $3$}
\end{figure}

Figure $2$ shows two situations of theorem $3$. $a_j(x)$ is the upper red line, $a_{j+1}(x)$ is the lower blue line, $a_{j+1}(x)\leq a_j(x)$. $\hat S_j$ is the red area between $a_j(x)$ and $a_{j+1}(x)$, $\displaystyle \int_b^c v_i(x) = A\ (i = 1,2,3)$, $\displaystyle \int_b^c a_{j+1}(x) dx < A < \int_b^c a_j(x) dx$. In figure 2(A), $v_1(x)$ and $v_2(x)$ both lie in $\hat S_j$, so $I[v_1(x)]=I[v_2(x)]$. In figure 2(B), $v_3(x)$ doesn't lie in $\hat S_j$, so theorem 3 tells $I[v_2(x)]<I[v_3(x)]$.

\section{Controllability of input in $L^p\ (p=1,2)$ spaces}

In TBVP, let $u(t,x)=F(x-t)+G(x+t)$, then
\begin{equation}
u(0,x)=F(x)+G(x)=f_0(x)\Rightarrow G(x)=f_0(x)-F(x).
\end{equation}
Thus
\begin{equation}
f_T(x)=u(T,x)=F(x-T)-F(x+T)+f_0(x+T),
\end{equation}
then
\begin{equation}\label{recurr}
\left\{\begin{array}{l}{F(x+2T)=F(x)-f_{T}(x+T)+f_{0}(x+2T)}, \\ {F(x-2T)=F(x)+f_{T}(x-T)-f_{0}(x)}.\end{array}\right.
\end{equation}
So the value of $F$ is determined by its value on $[-T,T]$. However, the value in this interval is not unique. Correspondingly, its solution is infinite.

For example, let $F\equiv 0$ on $[-T,T)$, then
\begin{equation}
F(x)=\left\{\begin{array}{l}{f_T(x+T)-f_0(x+2T)},\ x\in[-3T,-T), \\ {-f_T(x-T)+f_0(x),\ x\in [T,3T)}.\end{array}\right.
\end{equation}
In the same way, we can continuously find the value of $ F (x) $ on other $ 2T $ intervals according to the recursive formula (\ref{recurr}), and then substitute it into
\begin{equation}\label{eq2.0}
u(t,x)=F(x-t)-F(x+t)+f_0(x+t).
\end{equation}
We get the corresponding solution $u(t,x)$. Then
\begin{equation}
u_t(t,x)=-F'(x-t)-F'(x+t)+f_0'(x+t),
\end{equation}
and thus
\begin{equation}\label{eq.v}
v(x)=u_t(0,x)=-2F'(x)+f_0'(x).
\end{equation}
The thing we already know here is that when certain endpoint value conditions and some stronger requirements are met, the solution to the two-point boundary value problem of the one-dimensional wave equation exists and is not unique \cite{kong2011two}.

According to the actual situation as mentioned in the beginning, consider $x\in [-(2K_1+1)T,(2K_2+1)T]$. $F(x)$ satisfies the recurrence relation
\begin{equation}\label{eq.F.rec}
 F(x+T)=F(x-T)-f_T(x)+f_0(x+T),
\end{equation}
then from (\ref{eq.v}) and (\ref{eq.F.rec}) we get the recurrence relation of $v(x)$
\begin{equation}\label{eq.v.rec}
 v(x+T)=v(x-T)+2f_T'(x)-f_0'(x+T)-f_0'(x-T).
\end{equation}
So we know that the value of $ v (x) $ on $ [-T, T) $ determines its value on $ [-(2K_1 + 1) T, (2K_2 + 1) T]$. Moreover, with the recurrence relation (\ref{eq.v.rec}) and by theorem \ref{thm input}, the solution can be uniquely determined by $v(x)$ if and only if $v(x)\in C^1[-T,T]$ satisfies the integral condition 
\begin{equation}\label{integral.v}
\displaystyle\int_{-T}^{T} v(x) d x=2 f_{T}(0)-f_{0}(T)-f_{0}(-T)\triangleq A,
\end{equation}
as well as endpoints relations (\ref{first derivative}) and (\ref{second derivative}), which can be denoted as 
\begin{equation}\label{endpoints.v}
v(T)=v(-T)+c_1,\ v^{\prime}(T)=v^{\prime}(-T)+c_2.
\end{equation}
Therefore, it is reasonable to say $[-T,T]$ is the \textbf{decision interval} of $[-(2K_1+1)T,(2K_2+1)T]$.

And also from the above we can know that $v(x)$, $F'(x)$ are mutually determined. $F(x)=\displaystyle\int F'(x)+const$, by (\ref{eq2.0}) we can know that the constant term offsets and thus $u(t,x)$ is also uniquely determined. So $v(x), F'(x), u(t,x)$ can be found by two.

Furthermore, we calculate the input measured in $L^p[-(2K_1+1)T,(2K_2+1)T]$. Switch the bounds of integral interval by substituting equations (\ref{v(x)=}) and (\ref{v(x)=2}). Then $\|v(x)\|_{L^p}$ can be expressed by derivatives of $f_0$ and $f_T$. Specifically,
\begin{equation}\label{||v(x)||}
\begin{array}{ll}&\displaystyle\int_{-(2K_1+1)T}^{(2K_2+1)T}|v(x)|^p dx\\
&= \displaystyle\int^{T}_{-T}|v(x)|^p dx+\sum^{K_2}_{k = 1}\int^{(2k+1)T}_{(2k-1)T}|v(x)|^p dx+\sum_{k=1}^{K_1}\int_{(-2k-1)T}^{(-2k+1)T}|v(x)|^p dx\\
&= \displaystyle\int^{T}_{-T}|v(x)|^p dx+ \sum_{k=1}^{K_2}\int^{T}_{-T}|v(x)-t_{k+1}(x)|^p dx + \sum_{k=1}^{K_1}\int^{T}_{-T}|v(x)-t_{k+K+1}(x)|^p dx,
\end{array}
\end{equation}
where for $k=1,...,K_2$,
\begin{equation}\label{seq1t}
t_{k+1}(x) = f_0'(x)+2\sum_{n=0}^{k-1}f_0'(x+(2k-2n)T)-2\sum_{n = 0}^{k-1}f'_T(x+(2k-2n-1)T)-f'_0(x+2kT),
\end{equation}
and for $k=1,...,K_1$,
\begin{equation}\label{seq2t}
\displaystyle t_{k+K+1} = f_0'(x)+2\sum^{k-1}_{n=0}f'_T(x+(2n+1-2k)T)-2\sum_{n = 0}^{k-1}f'_0(x+2(n+1-k)T)-f'_0(x-2kT).
\end{equation}
Then denoting $t_1(x)\triangleq 0$, and let $K=K_1+K_2+1$, the above equation (\ref{||v(x)||}) is simplified to
\begin{equation}\label{Lp.input}
\displaystyle\int_{-(2K_1+1)T}^{(2K_2+1)T}|v(x)|^p dx
=\int^{T}_{-T}\sum_{i=1}^{K}|t_i(x)-v(x)|^pdx.
\end{equation}

\subsection{Results in $L^1$ space}

In (\ref{Lp.input}), let $p=1$,
\begin{equation}\label{input.L1}
\int_{-\left(2 K_{1}+1\right) T}^{\left(2 K_{2}+1\right) T}|v(x)| d x=\int_{-T}^{T} \sum_{i=1}^{K} |t_{i}(x)-v(x)| d x.
\end{equation}

\begin{dfn}
There is a sequence of functions $t_i(x),\ i=1,2,...,K$, then for some fixed $x$, the {\bf order} of $t_i(x)$ is its rank when putting them in a (not strictly) decreasing order and $Ord(t_i(x))=m$ means the rank is $m$. Moreover, if $t_i(x)\equiv m$ for $x\in (a,b)$, we say the order of $t_i(x)$ is well defined on the interval $(a,b)$ and denote the order by $Ord(t_i(x),x\in (a,b))=m$.
\end{dfn}

Now consider the sequence $t_i(x),\ i=1,...,K$ defined in (\ref{seq1t}) and (\ref{seq2t}). Therefore, for any fixed $x_0\in [-T,T]$ and for any $j\in\lbrace 1,...,K\rbrace$, we can always find some $t_k(x),\ k\in\lbrace 1,...,K\rbrace$ such that $Ord(t_k(x_0))=j$. Then for $x\in [-T,T]$, let $a_j(x)=t_k(x)$ such that $Ord(t_k(x))=j$. Then we can define a sequence of $a_j(x),\ j=1,...,K$ such that $Ord(a_j(x), x\in[-T,T])=j$.

Therefore, as is by the definition of order and the property of $t_i(x)\in C^{\infty}[-T,T] $, $\lbrace a_j(x)\rbrace$ is a decreasing sequence of continuous functions $+\infty > a_1(x)\geq	a_2(x) \geq ...\geq a_K(x)>-\infty$, and let $a_0(x)=+\infty$ and $a_{K+1}(x)=-\infty$. Then let 
\begin{equation}
U(v(x),x)=\sum_{i=1}^K|t_i(x)-v(x)|,\ x\in [-T,T],
\end{equation}
then
\begin{equation}
\frac{\partial U}{\partial v} =K-2j,\ a_{j+1}(x)\leq v\leq a_j(x),
\end{equation}
and it is left and right derivatives respectively when $v(x)=a_{j+1}(x)$ or $v(x)=a_j(x)$. And define
\begin{equation}
I[v]=\int_{-T}^T U(v(x),x) dx,
\end{equation}
where $\displaystyle \int_{-T}^T v(x) dx = A$.

As a result, TBVP input control problem in $L^1$ metric space can be answered as follows.

Firstly, according to theorem \ref{thm3}, $\exists v(x)\in C[-T,T],\ j$ such that 
\begin{equation}
a_{j+1} \leq v(x) \leq a_{j}(x).
\end{equation}
And all these $v(x)$ make up a set $S_j$, which lie in the {\bf strip region} between $a_{j+1}(x)$ and $a_j(x)$. Hereby denote the strip region containing the boundary as $\hat S_j$. Here we take index $j$ as the {\bf order of the strip}. Any $v(x)$ from $S_j$ minimizes (\ref{input.L1}). And if there is some $v(x)\in S_j$ is $C^1[-T,T]$, also satisfying endpoints equation (\ref{endpoints.v}), then its corresponding solution is MS of TBVP. 

Secondly, the MS is unique if and only if $\exists j$ such that $\displaystyle\int_{-T}^T a_j(x)dx=A$, and $a_j(x)$ is $C^1[-T,T]$ and satisfies endpoints equation (\ref{endpoints.v}). Otherwise the one-dimensional Lebesgue measure 
\begin{equation}
m(\lbrace x | v(x)\in (\hat S_j\backslash \partial \hat S_j) \rbrace) \neq 0.
\end{equation}
In this case we can always give the curve a little disturbance, then there should be infinite MS on the contrary. However, if we think inputs in the same order of strip $\hat S_j$ are equivalent, then the MS is unique in this sense.


Thirdly, by theorem \ref{thm approximation}, take any $v(x)\in S_j$, we can find a sequence of $v_n(x)$ such that 
\begin{equation}
\|v_n(x)-v(x)\|_{L^1[-T,T]}<\epsilon_n\xrightarrow{n} 0,
\end{equation}
with the following conditions satisfied
\begin{equation}
v_n(x)\in C^1[-T,T],\ \int_{-T}^T v_n(x) dx=\int_{-T}^T v(x) dx=A,\ v_n(T)=v_n(-T)+c_1,\ v'_n(T)=v'_n(-T)+c_2.
\end{equation}
And $\|v(x)\|_{L^1[-(2K_1+1)T,(2K_2+1)T]}\triangleq m,\ v(x) \in S_j$ is the lower bound of input space for the given metric. Note that
\begin{equation}
\begin{array}{ll}
\displaystyle |\int_{-\left(2 K_{1}+1\right) T}^{\left(2 K_{2}+1\right) T}|v_n(x)|  d x -m|
&\leq  \displaystyle \int_{-T}^{T} |\sum_{i=1}^{K}|t_{i}(x)-v_n(x)|-\sum_{i=1}^{K}|t_{i}(x)-v(x)|| d x \\
&\leq  \displaystyle \int_{-T}^{T} \sum_{i=1}^K|v(x)-v_n(x)| dx 
\leq 2KT\epsilon_n,
\end{array}
\end{equation}
so the corresponding solution sequence $\lbrace u_n\rbrace$ of the input sequence $\lbrace v_n(x)\rbrace$ is PMS according to definition \ref{dfn2}. If the input sequence $\lbrace v_n(x)\rbrace$ converges to $\tilde v(x)\in C^1[-T,T]$ satisfyting the endpoints relation (\ref{endpoints.v}), then the corresponding solution of the limit $\tilde v(x)$ must be MS, which should be contained in the statement of first point. In fact, as $v(x)\in C[-T,T]$, $\tilde v(x)$ is exactly $v(x)$. Otherwise the limit cannot induce MS but PMS always exists. For example, when $c_1$ in (\ref{endpoints.v}) does not belong to $[a_{j+1}(T)-a_{j}(-T),\ a_{j}(T)-a_{j+1}(-T)]$, then any function in $S_j$ can't meet $f(T) = f(-T)+c_1$, so MS does not exist in this case.

Lastly, the uniqueness of PMS can also be described in the sense of order of the curve $a_j$ or the strip $\hat S_j$ by considering its limit $\tilde v(x)$. If $v(x)=a_j(x)$, then $\tilde v(x)$ lies on $a_j(x)$ almost everywhere, that is one-dimensional Lebesgue measure 
\begin{equation}
m(\lbrace x|v(x)\neq a_j(x) \rbrace)=0. 
\end{equation}
Else if $a_{j+1} < v(x) < a_{j}(x)$, then $\tilde v(x)$ lies in the strip $\hat S_j$ almost everywhere, that is one-dimensional Lebesgue measure 
\begin{equation}
m(\lbrace x|v(x)\notin \hat S_j \rbrace)=0.
\end{equation}

\subsection{Results in $L^2$ space}

In (\ref{Lp.input}), let $p=2$,
\begin{equation}
\int_{-\left(2 K_{1}+1\right) T}^{\left(2 K_{2}+1\right) T}|v(x)| d x=\int_{-T}^{T} \sum_{i=1}^{K} |t_{i}(x)-v(x)|^2 d x,
\end{equation}
where $\displaystyle\int_{-T}^T v(x) dx=A$ and $v(x)\in C^1[-T,T]$. And by
\begin{equation}
\begin{array}{ll}&\displaystyle\int_{-T}^{T} \sum_{i=1}^{K}(t_{i}(x)-v(x))^2 dx\\=&\displaystyle K\int_{-T}^T(v(x)-\frac 1 K(\sum_{i=1}^K t_i(x)))^2dx+\int_{-T}^T (\sum_{i=1}^K t_i^2(x)-\frac 1 K (\sum _{i=1}^K t_i(x))^2) dx,\end{array}
\end{equation}
let $\displaystyle h(x)=v(x)-\frac 1 K(\sum_{i=1}^K t_i(x))$, and then it suffices to find
\begin{equation}
min \int_{-T}^T h^2(x) dx,
\end{equation}
such that
\begin{equation}
\int_{-T}^T h(x) dx=\int_{-T}^T v(x)-\frac 1 K(\sum_{i=1}^K t_i(x)) dx=A-\int_{-T}^T \frac 1 K(\sum_{i=1}^K t_i(x)) dx=A_1.
\end{equation}
By Holder's inequality,
\begin{equation}
|A_1|=|\int_{-T}^T h(x) dx|\leq (\int_{-T}^Th^2(x) dx)^{\frac 1 2}(\int_{-T}^T1^2 dx)^{\frac 1 2}=(\int_{-T}^Th^2(x) dx)^{\frac 1 2}\sqrt {2T},
\end{equation}
we get
\begin{equation}
\int_{-T}^{T}h^2(x)dx \geq \frac{A_1^2}{2T}.
\end{equation}
Since we need $h(x)$ to be continuous, to get the minimum value, in other words, the equal sign holds, if and only if $ h (x) $ is constant, and thus
\begin{equation}
h(x)=\frac{A_1}{2T},
\end{equation}
that is
\begin{equation}\label{vx.L2}
v(x)=\frac 1 K(\sum_{i=1}^K t_i(x))+\frac{A_1}{2T}.
\end{equation}
Then according to theorem \ref{thm approximation}, for any small $ \epsilon_n > 0 $, there is a $ C^1 $ function $ v_n (x) $ that is $ \epsilon_n$-close to $ v(x) $ in $ L^2 [-T, T] $, where $\epsilon_n\xrightarrow{n} 0$, and meets the conditions required by (\ref{integral.v}) and (\ref{endpoints.v}). Similar to $L^1$, denote $\|v(x)\|_{L^2[-(2K_1+1)T,(2K_2+1)T]}\triangleq m$, which is exactly the lower bound of input space for $L^2$ metric. Then
\begin{equation}
\begin{array}{ll}
& \displaystyle |\int_{-\left(2 K_{1}+1\right) T}^{\left(2 K_{2}+1\right) T}|v_n(x)| ^2 d x -m|\\
 \leq & \displaystyle \int_{-T}^{T} |\sum_{i=1}^{K}(t_{i}(x)-v_n(x))^2-\sum_{i=1}^{K}(t_{i}(x)-v(x))^2| d x \\
 \leq & \displaystyle \int_{-T}^{T} |(\sum_{i=1}^K(2t_i(x)-v_n(x)-v(x))(v(x)-v_n(x))| dx \\
 \leq & (\displaystyle \int_{-T}^T (\sum_{i=1}^K(2t_i(x)-v_n(x)-v(x))^2 dx)^{\frac 1 2}\ \|v(x)-v_n(x)\|_{L^2[-T,T]}\leq M\epsilon_n,
\end{array}
\end{equation}
where $\exists M>0$ such that $(\displaystyle \int_{-T}^T (\sum_{i=1}^K(2t_i(x)-v_n(x)-v(x))^2 dx)^{\frac 1 2}\leq M$.

Therefore, TBVP input control problem in $L^2$ metric space can be described in a similar way, though the results differ from that in $L^1$.
Firstly, MS exists if and only if $v(x)$ defined in equation (\ref{vx.L2}) satisfies equation (\ref{endpoints.v}). 
Secondly, if MS exists then it is unique, which is exactly the corresponding solution of such input $v(x)$.
Thirdly, PMS induced by $\lbrace v_n(x) \rbrace$ described above always exists.
Lastly, the limit of PMS always equals to $v(x)$ in (\ref{vx.L2}) almost everywhere in $L^2$, so the uniqueness can also be illustrated in this sense. 

\section{Discussion}

TBVP problem in one-dimensional wave equation has already been solved. In fact, the point that there are infinite solutions is intuitive. According to the nature of wave propagation, the decision area of each point $ (T, x) $ passed in the propagation is a triangle. Returning to $ t = 0 $, it is actually the initial value $f_0(x)$ and $ v (x)$ on the interval $ [x-T, x+T] $ on the $ x $ axis that determines the state at the point $ (T, x) $. In other words, points on an interval determine the point $ (T, x) $, and points on a larger interval determine points on a smaller interval. So we can intuitively feel that in such a decision mode, its solution is not unique.

In this article we mainly discuss the TBVP input control problem for one-dimensional wave equation on a compact supported set $[-(2K_1+1)T, (2K_2+1)T],\ K_1,K_2\in \mathbb N^*$ in $L^p\ (p=1,2)$ metric spaces. By controlling the input in $L^p[-(2K_1+1)T,(2K_2+1)T]$, discussion can be confined to $v(x)$ on $[-T,T]$. Certain conditions ensure the corresponding $u(t,x)$ is $C^2$. Though theorem \ref{thm input} does not use $F$ appeared in (\ref{eq.v}), the recursive relation of $v$ is actually found by $F$. In fact, $F$ satisfying (\ref{eq.F.rec}) is equivalent to $v$ satisfying the recursive relation (\ref{eq.v.rec}) and the integral condition (\ref{integral}). The integral condition for $v(x)$ is due to the loss of information during the differentiation of $F$ and it ensures the continuity of $F$. Additionally, the endpoints relation ensures that $v(x)$ is $C^1$ at the connection points $(2k+1)T,\ k\in \mathbb Z$.

In $L^1$ and $L^2$, there always exists $v(x)$ satisfying the integral condition, which makes the infimum of $\|v(x)\|_{L^p[-(2K_1+1)T,(2K_2+1)T]}$. In $L^2$, such $v(x)$ is $C^{\infty}[-T,T]$. In $L^1$, there is also some $v(x)\in C^{\infty}([-T,T]\backslash P)$, where the Lebesgue measure of $P$ is zero. Using  the denotation of the trapezoidal region $\Omega$, the corresponding $u(t,x)$ could be at least a pseudo-MS, which means
\begin{enumerate}
\item $u(t,x)\in C(\Omega)$.
\item $\exists D\subset\Omega$ and its Lebesgue measure is zero such that $u(t,x) \in C^2(\Omega\backslash D)$ and it satisfies the equation of TBVP in $\Omega\backslash D$.
\item $\forall \tilde u(t,x)\in C^2(\Omega)$ that is the solution of TBVP, let 
\begin{equation}
\Gamma=\{\{(t,x):t=0,x\in [-(2K_1+1)T,(2K_2+1)T]\}\backslash D\},
\end{equation}
then $||u_t(0,x)||_\Gamma\leq||\tilde u_t(0,x)||_\Gamma$.
\end{enumerate}

In the future, following intriguing problems remain open and call for investigation: (1) what happens if we consider wave equations in higher dimensions? (2) what happens if we investigate the input control in more general $L^p$ space, where $1\leq p\leq \infty$, and is there some common phenomena? (3) what happens if we change this simplest wave equation to nonlinear and quasilinear wave equations?

\subsection{Conclusions}
In summary, a real MS exists if and only if there is some pseudo-MS whose $v(x)$ is $C^1[-T,T]$ and satisfies endpoints relation. Therefore, we give an approximation method which could keep the integral, adjust endpoints values of $[-T,T]$ and $C^1$-dense in $L^p$. So here comes a sequence of functions $\lbrace v_n(x)\rbrace $ which makes the solution $C^2$ and could be arbitrarily $\epsilon$-close to the infimum in the metric, so that PMS always exists. Additionally, we find the inputs of MS of $L^1$ all lie on the same order of curve or strip, and that of $L^2$ is always unique if MS exists. Lastly, the uniqueness of PMS is described by its limit in a similar way.

\section{Acknowledgments}

The authors thank Yan Zhang for his useful comments on the manuscript, and Qingyou Sun, Chaojun Yu for their valuable discussions.


\end{document}